\theoremstyle{plain}
\theoremstyle{definition}
\newtheorem{theorem}{Theorem}[section]
\newtheorem{lemma}[theorem]{Lemma}
\newtheorem{definition}[theorem]{Definition}
\newtheorem{example}[theorem]{Example}
\DeclareMathAlphabet{\mathpzc}{OT1}{pzc}{m}{it}
\newcommand{\Ext}{\operatorname{Ext}}
\newcommand{\grlex}{{\textit grevlex}}
\DeclareMathOperator*{\SL}{SL}
\begin{document}

\title{Leading Terms of $\SL_3$ Web Invariants}

\author{V\'eronique Bazier-Matte}
\address{Laboratoire de Combinatoire et d'Informatique Math\'ematique,
Universit\'e du Qu\'ebec \`a Montr\'eal}
\email{bazier-matte.veronique@courrier.uqam.ca, douville.guillaume@courrier.uqam.ca,  alexander.garver@gmail.com, patriasr@lacim.ca, hugh.ross.thomas@gmail.com, emineyyildirim@gmail.com}
\author{Guillaume Douville}
\author{Alexander Garver}
\author{Rebecca Patrias}
\author{Hugh Thomas}
\author{Emine Y{\i}ld{\i}r{\i}m}

\begin{abstract}We use Khovanov and Kuperberg's web growth rules to identify the minimal term in the invariant associated to an $\text{SL}_3$ web diagram, with respect to a particular term order.
\end{abstract}

\maketitle

\section{Introduction}
Let $V$ be a three-dimensional vector space over $\mathbb C$, and let $V^*$ be its dual space.  Write $(V^*)^a \times V^b$ for the product of $a$ copies of $V^*$ and $b$ copies of $V$.  The coordinate ring of $(V^*)^a \times V^b$ is a polynomial ring over $\mathbb C$ in $3(a+b)$ variables, on which $\SL(V)$ acts naturally.  We will be interested in the subring of polynomials that are invariant under this action.

Kuperberg \cite{kuperberg1996spiders} describes a certain basis for this ring --- in fact, more than one basis.  Place $a$ white dots and $b$ black dots around a circle in some order.  Each white dot is associated with a 
particular copy of $V^*$ and each black dot is associated with a particular copy of $V$.  
The invariants are spanned by certain ``tensor diagrams", which are bipartite graphs whose bipartition extends to the fixed coloring on the boundary, such that every internal vertex has degree 3.  The tensor diagrams are not linearly independent, but Kuperberg identifies a spanning subset of ``web diagrams" that are linearly independent.  

Interest in this basis was recently reawakened thanks to the work of Fomin and Pylyavskyy \cite{fomin2016tensor}.  They constructed a cluster algebra structure on the ring of invariants that interacts well with the web basis
(except in the case that $a=b$ and the white and black vertices exactly alternate around the circle).  Some elements of this good behavior are conjectural.  For example, every cluster algebra has a collection of elements known as \emph{cluster monomials}, which are known to be linearly independent in a quite general setting that includes these cases~\cite{IKLP13}.  Fomin and Pylyavskyy conjecture that each cluster monomial corresponds to a web diagram; in fact, they have a conjectural criterion to distinguish this subset of the web diagrams: cluster monomials are conjectured to correspond to \emph{arborizable} web diagrams \cite[Conjectures 9.3 and 10.6]{fomin2016tensor}.  

Bases of cluster algebras have been of considerable interest since the genesis of cluster algebra theory going back to Fomin--Zelevinsky \cite{fomin2002cluster} and Sherman--Zelevinsky \cite{sherman2004positivity}.
Indeed, the hope that cluster algebras would give explicit combinatorial constructions of bases such as Lusztig's dual canonical basis or dual semi-canonical basis (of rings for which these are defined) was one of the initial motivations of the development of cluster algebras.  


Khovanov and Kuperberg \cite{khovanov1999web} describe a way to translate a web into a string in a three-letter alphabet.  This labelling has a natural interpretation as a particular monomial in the corresponding web invariant.  We show that there is a monomial order (in the sense of, for example, \cite{cox2007ideals}) on $\mathbb{C}[(V^*)^a \times V^b]$ such that this monomial is in fact the leading term of the corresponding web invariant.  This is of interest because it provides a very efficient algorithm for expanding an element of the ring of $\SL(V)$-invariant polynomials on $(V^*)^a\times V^b$ in terms of the web basis: in $\mathbb{C}[(V^*)^a \times V^b]$, locate the overall leading monomial and leading coefficient of $f$.  Then the web expansion of $f$ contains the corresponding web invariant with coefficient given by the leading coefficient of $f$.  Subtracting this from $f$, we obtain a new $\SL(V)$-invariant polynomial on $(V^*)^a\times V^b$ whose leading term is greater with respect to the term order.  Proceeding in this way, we obtain the web basis expansion of $f$.  We hope to apply this in future work to resolve the question of whether the web basis and the dual semicanonical basis agree, as raised, for example, in \cite{fomin2016tensor}.




\section*{Acknowledgements} The authors would like to thank Chris Fraser for helpful discussion at the start of this project. All authors received support from NSERC and the Canada Research Chairs Program. R.P. was also supported by CRM-ISM.

\section{Preliminaries}

\subsection{Tensor diagrams}
We summarize the exposition of \cite{fomin2016tensor} and refer the reader there for a more detailed treatment of this topic.

A \textit{tensor diagram} is a finite bipartite graph $D$ with a fixed proper coloring of its vertices using two colors, black and white, and with a fixed partition of its vertex set into a set bd$(D)$ of \emph{boundary vertices} and a set int$(D)$ of \emph{internal vertices} with the property that
\begin{enumerate}
\item each internal vertex is trivalent and
\item there is a fixed cyclic order on the edges incident to each internal vertex. 
\end{enumerate}

We say a tensor diagram is of \textit{type} $(a,b)$ if it has $a$ white boundary vertices and $b$ black boundary vertices. We define the \textit{multidegree} of a tensor diagram to be the sequence of degrees of the boundary vertices and note that this is defined up to rotation, see Example~\ref{ex:tensorinvariant}.

Let $V=\mathbb{C}^3$. The special linear group $\SL(V)$ acts on $V$ and $V^*$, where the latter action is defined by $(gu^*)(v)=u^*(g^{-1}(v))$ for $v\in V$, $u^*\in V^*$, and $g\in \SL(V)$. Thus $\SL(V)$ acts on the vector space
\[(V^*)^a \times V^b = V^*\times \cdots\times V^*\times V\times\cdots \times V\]
and on its coordinate ring, which is a polynomial ring in $3(a+b)$ variables. Define $R_{(a,b)}(V)$ to be the ring of $\SL(V)$-invariant polynomials on $(V^*)^a\times V^b$.
\[R_{(a,b)}(V)=\mathbb{C}[(V^*)^a\times V^b]^{\SL(V)}.\] We will sometimes refer to $R_{(a,b)}$ as the \textit{mixed invariant ring}. For $b\geq 3$, the ring $R_{(0,b)}$ is isomorphic to $\mathbb{C}[\text{Gr}_{3,b}]$---the homogeneous coordinate ring of the Grassmann manifold of $3$-dimensional subspaces in 
$\mathbb{C}^{\color{purple} b}$ with respect to its Pl\"ucker embedding---and $R_{(1,b)}$ is isomorphic to the homogeneous coordinate ring of the two-step partial flag manifold
\[\{(V_1,V_3):V_1\in \text{Gr}_{1,b},V_3\in \text{Gr}_{{\color{purple}3},b}\};\]
see \cite{fomin2016tensor} for further details.

We define the \textit{signature} of an invariant $f$ to be the word in $\{\circ,\bullet\}$ that represents the order of the $a+b$ arguments of $f$. For example, if 
\[f:V^*\times V^* \times V \times V^* \times V\times V\rightarrow \mathbb{C},\] its signature is $\sigma=[\circ,\circ,\bullet,\circ,\bullet,\bullet]$. Let $R_\sigma(V)$ denote the ring of $\SL(V)$ invariants with signature $\sigma$. If $\sigma$ contains $a$ copies of $\circ$ and $b$ copies of $\bullet$, we say $\sigma$ is of \textit{type} $(a,b)$ and note that $R_\sigma(V)\cong R_{(a,b)}(V)$.


Using coordinates, we identify $R_{(a,b)}(V)$ with the ring of $\SL_3$ invariants of collections of $a$ covectors 
\[y(v)=[y_{-1}(v) \ y_{0}(v) \ y_{1}(v)]\] and $b$ vectors
\[x(v)=\begin{bmatrix} x_{-1}(v) \\ x_0(v) \\ x_1(v)\end{bmatrix}\] labeled by a fixed collection of $a$ white and $b$ black boundary vertices on a disk.

Define a \emph{proper} edge coloring of a tensor diagram to be a labeling of all edges in $D$ by  the \emph{colors} $-1,0,1$  such that the edges incident to any internal vertex $v$ have distinct colors. A tensor diagram $D$ of type $(a,b)$ represents an $\SL(V)$ invariant in $R_{(a,b)}(V)$, denoted by $[D]$. This invariant is defined by

\[[D]=\sum_{\ell \in L} \left( \prod_{v\in\text{int}(D)} \text{sign}(\ell(v))\right) \left(\prod_{\stackrel{v\in \text{bd}(D)}{ v\text{ black}}} x(v)^{\ell(v)}\right) \left(\prod_{\stackrel{v\in \text{bd}(D)}{v\text{ white}}}y(v)^{\ell(v)}\right),\]

where
\begin{itemize}
\item $L$ is the set of all proper edge colorings of $D$,
\item sign$(\ell(v))$ denotes the sign of the permutation determined by the cyclic ordering of the labeled edges incident to $v$, where we consider the permutation $-101$ (in one-line notation) to have positive sign,
\item $x(v)^{\ell(v)}$ denotes the monomial $\prod_ex_{\ell(e)}(v)$, the product over all edges $e$ incident to $v$, and similarly for $y(v)^{\ell(v)}$. 
\end{itemize}

\begin{example}~\label{ex:tensorinvariant} 
Figure~\ref{proper-edge-colorings} shows three proper edge colorings of a tensor diagram $D$ with boundary vertices labeled. We denote the corresponding vectors and covectors by 
\[\begin{bmatrix}x_{-1,1} \\ x_{0,1} \\ x_{1,1}\end{bmatrix} \hspace{.3in}[y_{2,-1} \ y_{2,0} \ y_{2,1}] \hspace{.3in}[y_{3,-1} \ y_{3,0} \ y_{3,1}] \hspace{.3in} \begin{bmatrix}x_{-1,4} \\ x_{0,4} \\ x_{1,4}\end{bmatrix}. \]
The first proper edge coloring contributes the monomial $x_{-1,1}y_{2,1}y_{3,-1}^2x_{-1,4}x_{1,4}$, the second contributes $-x_{0,1}y_{2,0}y_{3,-1}y_{3,1}x_{-1,4}x_{1,4}$, and the third contributes
$x_{-1,1}y_{2,0}y_{3,-1}y_{3,0}x_{0,4}^2$.

\begin{center}
\begin{figure}
\includegraphics[width=6in]{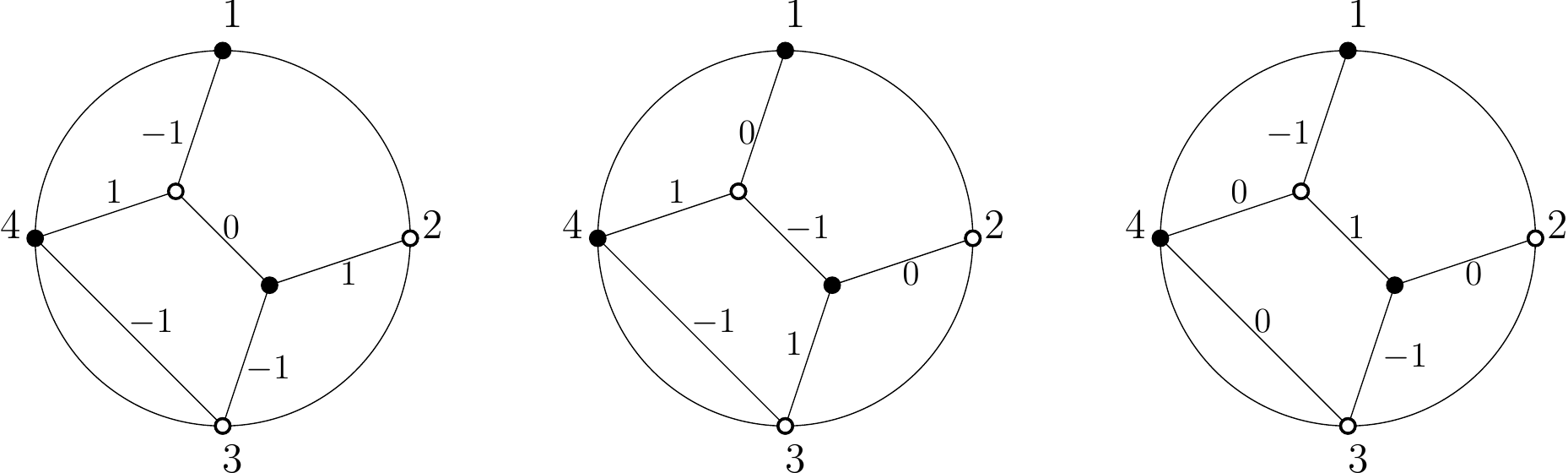}
\caption{Some proper edge colorings of the tensor diagram $D$}~\label{proper-edge-colorings}
\end{figure}
\end{center}
\end{example}


\subsection{Webs}
Webs were first defined by Kuperberg in \cite{kuperberg1996spiders}. We now discuss them using the terminology of \cite{fomin2016tensor}. 

\begin{definition}[Definition 5.1 \cite{fomin2016tensor}] A \textit{web} is a tensor diagram $D$ embedded in an oriented disk so that its edges do not cross or touch, except at endpoints. Each web is considered up to an isotopy of the disk that fixes the boundary. A web is \textit{non-elliptic} if it has no multiple edges and no 4-cycles whose vertices are all interior. The invariant $[D]$ associated with a non-elliptic web $D$ is called a \textit{web invariant}.
\end{definition}

Given a web $D$, we define its \textit{signature} to be the cyclic pattern of colors of the boundary vertices of $D$, and consider the signature up to cyclic permutation. The tensor diagram in Example~\ref{ex:tensorinvariant} is a non-elliptic web with signature $[\bullet,\circ,\circ,\bullet]$. 

\begin{theorem}[\cite{kuperberg1996spiders}] Web invariants with a fixed signature $\sigma$ of type $(a,b)$ form a linear basis in the mixed invariant ring $R_{\sigma}(V)\cong R_{(a,b)}(V)$.
\end{theorem}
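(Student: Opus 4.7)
The plan is to separate the theorem into its two components: first, that the web invariants of signature $\sigma$ span $R_\sigma(V)$, and second, that they are linearly independent. Both halves hinge on interpolating between arbitrary tensor diagrams and the restricted class of non-elliptic webs via Kuperberg's local skein relations.

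For spanning, I would begin with the first fundamental theorem of invariant theory for $\SL(V)$: the mixed invariant ring $R_{(a,b)}(V)$ is generated by the elementary invariants consisting of the determinant of three vectors, the determinant of three covectors, and the scalar pairing $y(v)\,x(w)$ of a covector with a vector. Diagrammatically these are precisely the tripod tensor diagrams with monochromatic boundary and the single-edge diagrams joining a white and a black boundary vertex. Because multiplication of invariants corresponds to disjoint union of tensor diagrams (with boundary vertices interlaced in the prescribed cyclic order), every element of $R_\sigma(V)$ is already a $\mathbb C$-linear combination of tensor diagrams of signature $\sigma$. The next step is to reduce arbitrary tensor diagrams to non-elliptic webs using Kuperberg's three local relations: an interior closed loop evaluates to the scalar $3$, an interior bigon (two internal vertices joined by a double edge) evaluates to $2$ times the diagram obtained by removing the two internal vertices and connecting the loose ends by two parallel strands, and an interior $4$-cycle on internal vertices evaluates to the sum of the two ways to ``uncross'' it into an $H$-shape. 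Each application strictly decreases a combinatorial complexity (for example, the number of internal edges), so iterated reduction terminates in a $\mathbb C$-linear combination of diagrams with no internal loops, bigons, or $4$-cycles, which is exactly the class of non-elliptic webs.

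For linear independence my plan is to mirror the leading-term philosophy the present paper is about to develop: choose a monomial order on $\mathbb C[(V^*)^a \times V^b]$ --- say a lexicographic order on the coordinate variables graded first by boundary vertex and then by the colors $-1, 0, 1$ --- and exhibit, for each non-elliptic web $D$, a distinguished proper edge coloring whose contribution to $[D]$ is the leading monomial with respect to this order, having the further property that distinct non-elliptic webs of signature $\sigma$ yield distinct leading monomials. Any nontrivial linear relation among the $[D]$ would then force cancellation of the largest leading monomial that appears, a contradiction. The natural candidate for the ``distinguished coloring'' is the one produced by the Khovanov--Kuperberg growth rules referenced in the introduction, whose output string in the three-letter alphabet $\{-1, 0, 1\}$ records exactly the exponents on the boundary coordinates in the putative leading monomial.

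The main obstacle is precisely this last point. Confluence and spanning reduce to local moves and a termination argument of combinatorial character, but linear independence requires a genuinely global invariant of the web --- a state-sum model, a bijection with a known basis such as highest-weight tableaux, or a distinguished leading monomial --- that can be read off the tensor $[D]$ and that separates \emph{all} non-elliptic webs of a given signature, not merely generic ones. Verifying that the Khovanov--Kuperberg string is web-injective, and that the corresponding monomial really survives as the leading term after summing over all proper edge colorings, is the crux of the argument and is essentially the content of the main theorem to be proved in the body of the paper.
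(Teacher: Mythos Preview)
The paper does not prove this theorem; it is stated with a citation to \cite{kuperberg1996spiders} and no argument is supplied. There is therefore no proof in the paper to compare against, and what follows is an assessment of your sketch on its own merits and against Kuperberg's original argument.

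For spanning, the first fundamental theorem together with the loop/bigon/square relations are indeed the ingredients, but you have skipped a step. A monomial in the elementary invariants corresponds to a tensor diagram that is typically \emph{not} planar once the boundary vertices are placed in the prescribed cyclic order $\sigma$: its edges cross inside the disk (e.g.\ the product $u_3^*(v_1)\cdot u_4^*(v_2)$ for $\sigma=[\bullet,\bullet,\circ,\circ]$). Before the elliptic reductions can be applied you must invoke the crossing skein relation, which rewrites each crossing as a linear combination of planar local pictures; only after all crossings are resolved do you have webs to which the loop, bigon, and square relations apply. (A minor slip: the bigon relation replaces the bigon by a \emph{single} strand with scalar $-2$, not two parallel strands with scalar $2$.)

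For linear independence, your plan is exactly the leading-term argument that constitutes the main contribution of \emph{this} paper (Theorem~\ref{min_thm} together with the injectivity of the Khovanov--Kuperberg bijection), so using it here to justify the cited background theorem would be circular---as you correctly flag in your final paragraph. Kuperberg's own argument in \cite{kuperberg1996spiders} proceeds differently and avoids any monomial analysis: he establishes, via an Euler-characteristic computation, that the number of non-elliptic webs with boundary word $\sigma$ equals the number of dominant lattice paths in the $\SL_3$ weight lattice with steps dictated by $\sigma$, which in turn equals $\dim R_\sigma(V)$ by classical representation theory. Spanning plus equality of cardinality and dimension then forces linear independence, with no distinguished monomial ever identified.
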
 

\subsection{Sign and state strings}
Let us now restrict our attention to non-elliptic webs where each boundary vertex has degree exactly 1. In \cite{khovanov1999web}, Khovanov and Kuperberg describe a bijection between such non-elliptic webs with fixed cyclic labeling of the boundary vertices and certain strings, which we now describe.

A \textit{sign string} of length $n$ is a sequence $S = (s_1, s_2, \ldots, s_n) \in \{+,-\}^n$. A \textit{state string} of length $n$ is a sequence $J = (j_1, j_2, \ldots, j_n) \in \{-1,0,1\}^n$. A \textit{sign and state string} of length $n$ is a sequence $((s_1,j_1), (s_2,j_2),\ldots,(s_n,j_n))$, where each element is a sign paired with a state. 
A sign and state string corresponds to a lattice path in the $\SL_3$ weight lattice, as follows.
 
Each vector in the sign and state string $(s_k,j_k)$ has a weight $\mu_k$. The image in Figure~\ref{fig:weightlattice} shows our convention for the weight for $(+,1)$ in red parallel to the $x$-axis, the weight for $(+,0)$ in blue 120$^\circ$ counterclockwise from the previous, and the weight for $(+,-1)$ in green. The weight for $(-,-j_k)$ is the negative of the weight for $(+,j_k)$. Given a sign and state string of length $n$, we define a path in the weight lattice of $\SL_3$ by $\pi=\pi_0,\ldots,\pi_n$, where $\pi_0$ is the origin and $\pi_k=\pi_{k-1}+\mu_k$.

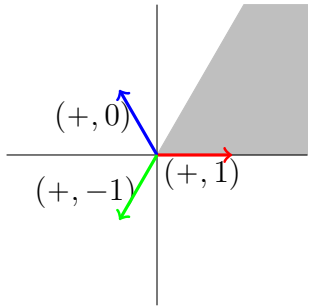
\begin{figure}[hbt!]
\begin{center}
\begin{tikzpicture}
\node at (.6,-.25) {$(+,1)$};
\node at (-.85,.5) {$(+,0)$};
\node at (-.95,-.5) {$(+,-1)$};
\draw[lightgray, top color=lightgray,bottom color=lightgray] (0,0) -- (2,0) -- (2,2) -- (1.1547,2) -- cycle;
\draw (-2,0)--(2,0);
\draw (0,2)--(0,-2);
\draw[->, very thick, red] (0,0)--(1,0);
\draw[->, very thick, blue] (0,0)--(-.5,.866);
\draw[->, very thick, green] (0,0)--(-.5,-.866);
\end{tikzpicture}
\end{center}
\caption{The weight lattice for $\SL_3$ with dominant chamber shaded gray.}
\label{fig:weightlattice}
\end{figure}

The \textit{dominant Weyl chamber} is defined as the subset of the weight lattice consisting of positive integral linear combinations of the weights for $(+,1)$ and $(-,1)$ and is shaded gray in the figure. 
We call a sign and state string \textit{dominant} if its corresponding path ends at the origin and is contained in the dominant chamber. For example, the sign and state string $(+,1),(+,1),(+,0),(-,1),(+,0),(+,-1),(-,0),(+,-1),(-,-1)$ is dominant as shown in Figure~\ref{dominant-path}. 

\begin{figure}[h!]
\includegraphics[width=2in]{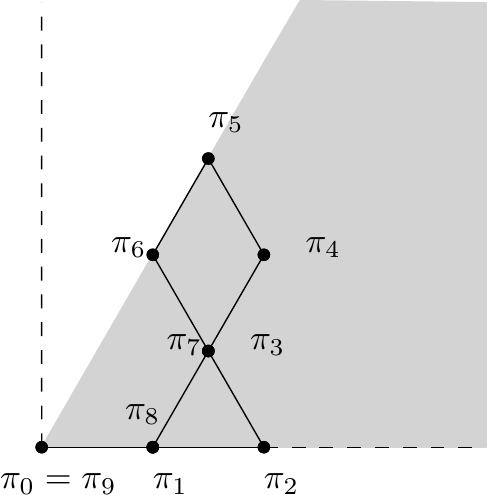}
\caption{The dominant path in the weight lattice corresponding to the sign and state string $(+,1),(+,1),(+,0),(-,1),(+,0),(+,-1),(-,0),(+,-1),(-,-1)$.}~\label{dominant-path}
\end{figure}

Given a dominant sign and state string of length $n$,  Khovanov and Kuperberg describe how to build a non-elliptic web with $n$ boundary vertices all of degree 1 by giving a series of inductive rules. To construct a web from a dominant sign and state string of length $n$, place $n$ vertices on a line segment with colors corresponding to the sign string: boundary vertex $k$ is black (resp., white) if $s_k=+$ (resp., $s_k=-$). Draw an edge stemming from each vertex, and label each edge with the corresponding state given in the sign and state string. Construct the web by successively applying an applicable growth rule from Figure~\ref{fig:growthrules}. The top row shows the rules for edges of different sign, and the bottom row shows the rules for edges of the same sign. In the first three rules in the top row, the sign of the left and right edges swap to obtain the sign of the two lower edges, while in the bottom row, the new lower edge has sign opposite of the sign of the top edges. Lastly, connect the ends of the initial line segment together so the web is contained in a disk. The first half of  Figure~\ref{proper-KK} illustrates this process. Note that the rest of the bipartite vertex coloring is determined by the colors of the boundary vertices.

\begin{figure}[h]
\raisebox{.5in}{Different:} \hspace{.3in}\begin{tikzpicture}[scale=.8] 
\draw[-] (1,-1) -- (0,0)  node [above, label=right: 1] {};
\draw[-] (1,-1) -- (0,-2) node [below, label=right:0] {};
\draw[-] (1,-1) -- (2,-1) node {};
\draw[-] (2,-1) -- (3,0) node [above, label=left:0] {};
\draw[-] (2,-1) -- (3,-2) node [below, label=left:1] {};
\end{tikzpicture}
\hspace{.3in}
\begin{tikzpicture}[scale=.8] 
\draw[-] (1,-1) -- (0,0)  node [above, label=right: 0] {};
\draw[-] (1,-1) -- (0,-2) node [below, label=right:-1] {};
\draw[-] (1,-1) -- (2,-1) node {};
\draw[-] (2,-1) -- (3,0) node [above, label=left:0] {};
\draw[-] (2,-1) -- (3,-2) node [below, label=left:1] {};
\end{tikzpicture}
\hspace{.3in}
\begin{tikzpicture}[scale=.8] 
\draw[-] (1,-1) -- (0,0)  node [above, label=right: 0] {};
\draw[-] (1,-1) -- (0,-2) node [below, label=right:-1] {};
\draw[-] (1,-1) -- (2,-1) node {};
\draw[-] (2,-1) -- (3,0) node [above, label=left:-1] {};
\draw[-] (2,-1) -- (3,-2) node [below, label=left:0] {};
\end{tikzpicture}
\hspace{.3in}
\raisebox{.5in}{\begin{tikzpicture}[scale=.8] 
\draw[-] (0,0)  to[out=-20,in=200] (2,0)  node [below, label=right:-1] {};
\draw[-] (2,0)  to[out=200,in=-20] (0,0)  node [below, label=left:1] {};
\end{tikzpicture}} \\
\vspace{.3in}
\raisebox{.5in}{Same:}\hspace{.3in} \begin{tikzpicture}[scale=1] 
\draw[-] (1,-1) -- (0,0)  node [above, label=right: 1] {};
\draw[-] (1,-1) -- (1,-2) node [below, label=right:1] {};
\draw[-] (1,-1) -- (2,0) node [above, label=left:0] {};
\end{tikzpicture}\hspace{.3in}
\begin{tikzpicture}[scale=1] 
\draw[-] (1,-1) -- (0,0)  node [above, label=right: 0] {};
\draw[-] (1,-1) -- (1,-2) node [below, label=right:-1] {};
\draw[-] (1,-1) -- (2,0) node [above, label=left:-1] {};
\end{tikzpicture}\hspace{.3in}
\begin{tikzpicture}[scale=1] 
\draw[-] (1,-1) -- (0,0)  node [above, label=right: 1] {};
\draw[-] (1,-1) -- (1,-2) node [below, label=right:0] {};
\draw[-] (1,-1) -- (2,0) node [above, label=left:-1] {};
\end{tikzpicture}
\caption{Khovanov--Kuperberg's inductive growth rules}
\label{fig:growthrules}
\end{figure}
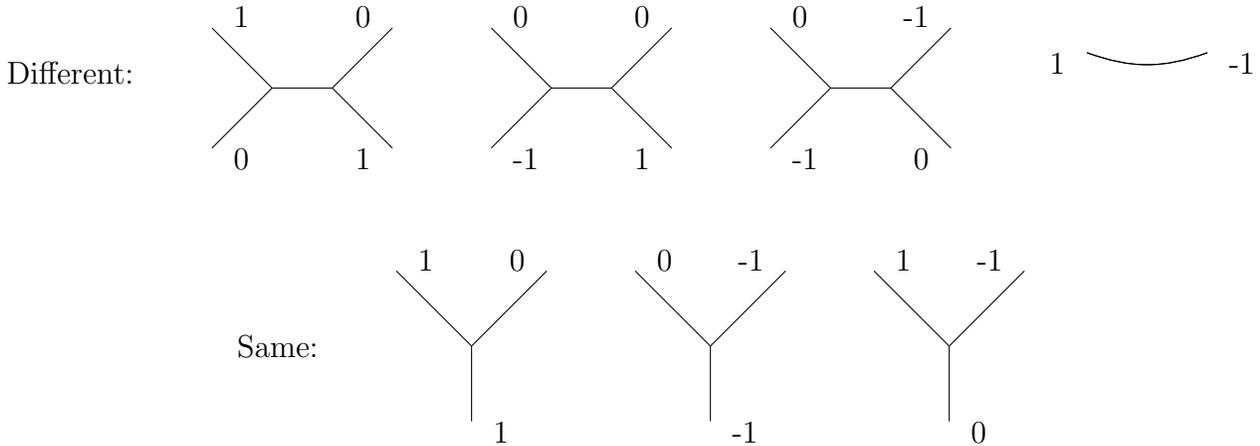

 Khovanov and Kuperberg show that if we begin with a dominant sign and state string, the algorithm successfully terminates in a web, and the resulting web does not depend on the choices made of which rules to apply.

Note that each application of the growth rules assigns a sign and state to the new half-edges that it produces at the bottom of each diagram in Figure \ref{fig:growthrules} but that in two respects, this does not amount to an assignment of a sign/state pair to each edge of the web. For one thing, the horizontal edges in the first three diagrams in the top row of Figure \ref{fig:growthrules} are not assigned a pair. Secondly, the final diagram in the top row joins two half-edges which have different states (1 and $-1$) and opposite signs.

\section{Minimal coloring}\label{min-sec}

As proven in \cite{khovanov1999web}, performing the growth algorithm starting from any dominant sign and state string will result in a non-elliptic web where each boundary vertex has degree exactly 1. Conversely, given a non-elliptic web with a fixed linear order on the boundary vertices, they also describe how to construct its corresponding sign and state string using a \textit{minimal cut path algorithm}, which is inverse to the growth rules. We do not describe this algorithm here, but use its existence to emphasize the fact that there is a one-to-one correspondence between non-elliptic webs with fixed boundary vertex order and dominant sign and state strings. Due to this correspondence, we may define the following.

\begin{definition}
Let $D$ be a non-elliptic web with sign string $S$. The \textit{KK-labeling} for $D$ is the state string  that---along with $S$---produces $D$ using the growth algorithm of Khovanov and Kuperberg described previously. In other words, the KK-labeling is a function that assigns each edge incident to a boundary vertex of $D$ to the corresponding entry in the state string for $D$.
\end{definition}

We next describe how to obtain a proper edge coloring of the non-elliptic web $D$ using colors $-1,0,1$ starting from its KK-labeling. To obtain a coloring, one need only perform the following simple steps.
\begin{enumerate}
\item Perform the Khovanov--Kuperberg growth algorithm to obtain an edge labeling consisting of a sign and state of each non-horizontal edge of the web $D$.
\item If an edge has sign $+$, its color is the same as its state.
\item If an edge has sign $-$, its color is the negative of its state.
\item If an edge is an unlabelled horizontal edge, its color is the unique color that will result in a proper edge coloring.
\end{enumerate}
Note that, when we have applied the growth rule from the rightmost diagram in the top row of Figure \ref{fig:growthrules}, as already mentioned, the resulting horizontal edge has two different sign/state labels, one at each end, but they determine the same color, so our coloring is well-defined.

The fact that the coloring obtained from the KK-labeling is a proper edge coloring using $-1,0,1$ follows easily from the inductive growth rules. This fact is also used by Petersen--Pylyavskyy--Rhoades in \cite{petersen2009promotion} and by Patrias in \cite{patrias2019promotion}. Figure~\ref{proper-KK} shows an example of the proper edge coloring starting from a KK-labeling and uses the same sign and state string as in Figure~\ref{dominant-path}.


\begin{figure}[h!]
\includegraphics[width=12cm]{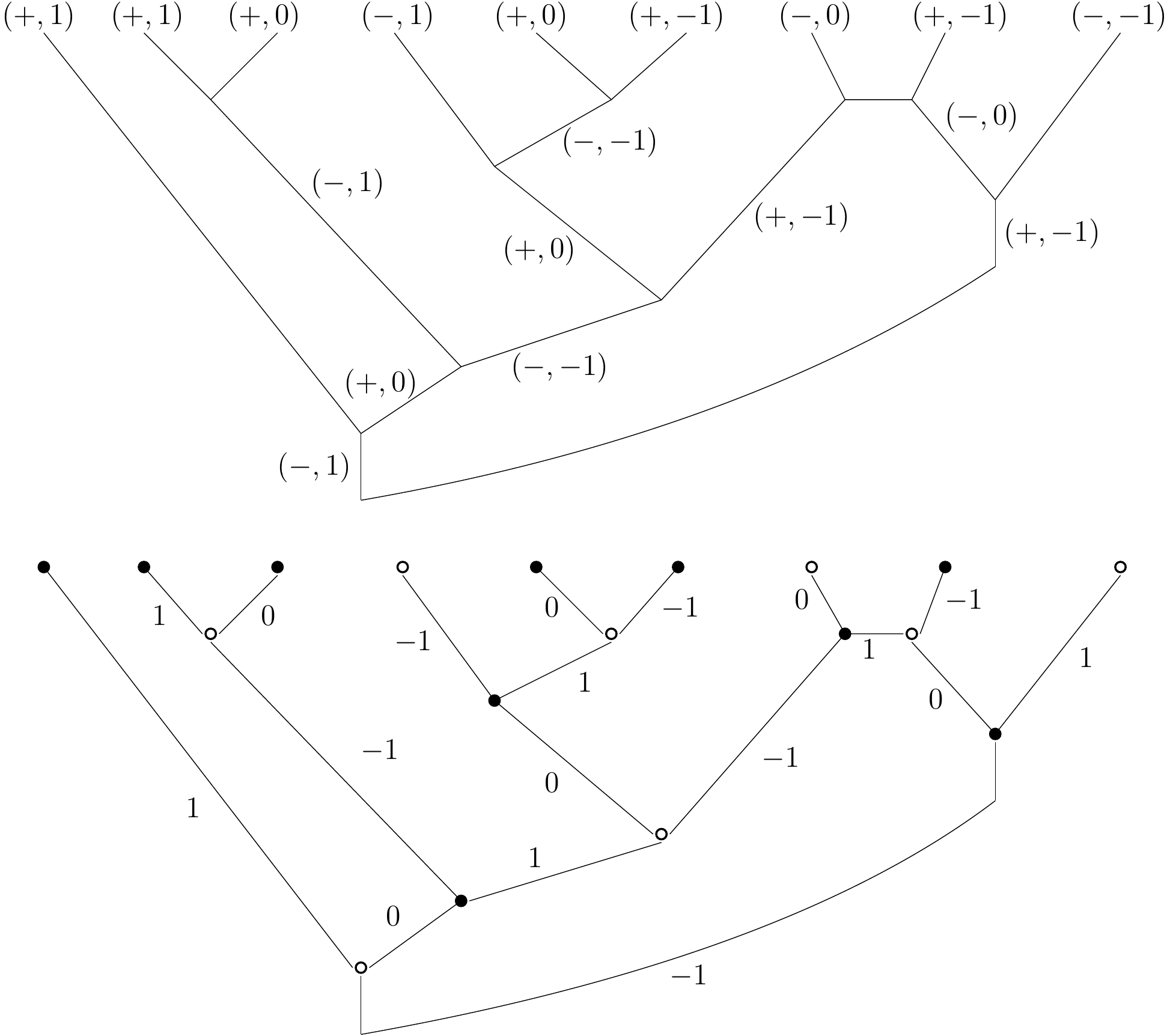}
\caption{The first image shows a sign and state string made into a web using the growth rules. The second image shows the proper edge coloring and bipartite vertex coloring obtained from the KK-labeling.}~\label{proper-KK}
\end{figure}


We next define a lexicographic order on all proper edge colorings of a non-elliptic web $D$ by linearly ordering the colors of edges incident to black boundary vertices by $1<0<-1$ and linearly ordering the colors edges incident to white boundary vertices by $-1<0<1$. Given a web $D$ with a fixed clockwise numbering of its boundary vertices $1$ to $n$, 
we may then lexicographically order the proper edge colorings of $D$ by examining the word obtained from reading the colors along the boundary vertices. For example, the coloring shown in Figure~\ref{proper-KK} corresponds to the word $(1_{\bullet},1_{\bullet},0_{\bullet}, -1_{\circ},0_{\bullet}, -1_{\bullet},0_{\circ},-1_{\bullet},1_{\circ})$, where we include the vertex color as a subscript for convenience. We see that this coloring is lexicographically smaller than a coloring with word $(1_{\bullet},1_{\bullet},-1_{\bullet},-1_{\circ},0_{\bullet},-1_{\bullet},0_{\circ},0_{\bullet},1_{\circ})$ since $0_{\bullet}<-1_{\bullet}$. Note that if two proper edge colorings have the same colors on the edges adjacent to the boundary, these colorings are equal in this lexicographic order. 

Using this lexicographic order, we may now discuss the lexicographically minimal coloring(s) for any non-elliptic web $D$. We shall see that there is a unique such coloring and it corresponds to the coloring obtained using the KK-labeling.

\begin{theorem}\label{min_thm} The proper edge coloring determined by the KK-labeling of a non-elliptic web is lexicographically minimal.
\end{theorem}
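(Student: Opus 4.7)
The plan is to prove the theorem by strong induction on the total number of edges of the web $D$, using the inverse of the Khovanov--Kuperberg growth rules to reduce $D$ to a smaller non-elliptic web $D'$.

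The starting observation is that the dominance of the lattice path forces the first KK state $j_1 = 1$: a direct check shows that for either $s_1 = +$ or $s_1 = -$, only $j_1 = 1$ places $\pi_1 = \mathrm{weight}(s_1, j_1)$ in the dominant Weyl chamber. Consequently the KK color at boundary vertex $1$ is $1$ (when $s_1 = +$) or $-1$ (when $s_1 = -$), which is the lex-minimum element of $\{-1, 0, 1\}$ under the relevant ordering. Any proper edge coloring of $D$ differing from KK at position $1$ is therefore strictly lex-larger, so I may restrict attention to colorings matching KK at position $1$.

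For the inductive step, I use the confluence of the growth algorithm to extract a first growth rule involving boundary position $1$, producing three cases: (i) an arc between positions $1$ and $2$; (ii) a ``same'' rule attaching positions $1, 2$ to a shared internal vertex $v$; or (iii) a ``different'' rule forming an H between positions $1, 2$ and two internal vertices $v_1, v_2$. Un-applying the rule yields a reduced non-elliptic web $D'$ with strictly fewer edges. Proper colorings of $D$ matching KK at position $1$ correspond to proper colorings of $D'$ with prescribed data at the ``new'' boundary introduced by the reduction, via restriction together with the (up to a finite ambiguity) forced coloring at the removed internal vertex(es). Case (i) is immediate because the arc color is forced by the KK color at position $1$. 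In cases (ii) and (iii), fixing $c_1 = c_1^{KK}$ leaves exactly two proper assignments of colors to the remaining edges at $v$ (respectively $v_1, v_2$), parametrized by the color of the new pendant(s) of $D'$. A direct check shows that picking the lex-smaller $c_2$ in the lex order on $D$'s boundary corresponds to picking the lex-smaller pendant color in the lex order on $D'$'s boundary; the inductive hypothesis applied to $D'$ then concludes. For example, in the same-rule case with $s_1 = s_2 = +$ and $c_1^{KK} = 1$: choosing $c_2 = 0$ forces the pendant color in $D'$ to be $-1$, which is lex-smallest there, while $c_2 = -1$ forces pendant color $0$; the inductive hypothesis thus pins down which of Rules $1$ or $3$ was applied.

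The primary obstacle is the case analysis in cases (ii) and (iii). One must verify, across the three subcases of each rule and the possible sign configurations, that the KK assignment matches the lex-minimizing pendant color in $D'$; this amounts to a tabulation aligning the growth-rule bottom state with the dominant state string of $D'$. A secondary, more delicate obstacle is the confluence step: when no growth rule applies at positions $1, 2$ (for example when $s_1 = s_2$ and $j_1 = j_2 = 1$, as happens in the hexagonal web), one cannot immediately reduce at position $1$. To handle this, I would instead identify the leftmost reducible pair $(k, k+1)$ in the growth history, observe that by the dominance observation applied successively the first $k-1$ KK letters are already lex-minimal and any competing proper coloring with a strictly smaller prefix can be ruled out by a short argument extending the dominance analysis, and then run the same bijection argument at the pair $(k, k+1)$. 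This reduction of the ``first-applicable-rule'' question to a position-by-position dominance analysis is the subtlest part of the argument and is where I expect most of the technical work to lie.
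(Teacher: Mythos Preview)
Your approach is correct and is essentially the paper's own argument: induct by un-applying a growth rule and use that the trimmed web's KK-labeling is lex-minimal by induction. The paper avoids your ``secondary obstacle'' from the outset by trimming at the smallest $i$ with $j_{i+1}\neq 1$ --- this is exactly your leftmost reducible pair, since $j_1=\cdots=j_i=1$ means no rule applies at any earlier adjacent pair and the first $i$ boundary colors are automatically lex-minimal --- and then carries out the same case split into (arc)/(same)/(different) that you describe.
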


We will prove this theorem by induction on the number of vertices of the non-elliptic web. Before giving the proof, we describe the procedure we will use to reduce a web to one with strictly fewer vertices.

Let $D$ be a non-elliptic web with $n$ cyclically-labeled boundary vertices.
From the definition of a dominant path, we see that the first entry of the state string of $D$ will be 1 and the last entry will be $-1$. Given the sign and state string of $D$, $((s_1,j_1),\ldots,(s_n,j_n))$, we may thus identify some smallest $1\leq i<n$ such that $j_{i+1}\neq 1$. Since the inductive rules for the growth algorithm may be applied in any order, we know that the $(i+1)$-th boundary vertex of $D$ is connected to the $i$-th boundary vertex of $D$ in one of the ways shown in Figure~\ref{fig:reduce}.

\begin{figure}[h]
$$\begin{array}{ccccc}
\begin{tikzpicture}[scale=.8] 
\draw[-] (1,-1) -- (0,0)  node [above, label=right: $j_i$] {};
\draw[-] (1,-1) -- (0,-2) {};
\draw[-] (1,-1) -- (2,-1) node {};
\draw[-] (2,-1) -- (3,0) node [above, label=left:$j_{i+1}$] {};
\draw[-] (2,-1) -- (3,-2) node {};
\end{tikzpicture}&
& \begin{tikzpicture}[scale=1] 
\draw[-] (1,-1) -- (0,0)  node [above, label=right: $j_i$] {};
\draw[-] (1,-1) -- (1,-2) node {};
\draw[-] (1,-1) -- (2,0) node [above, label=left:$j_{i+1}$] {};
\end{tikzpicture}
& &
\raisebox{.3in}{\begin{tikzpicture}[scale=.8] 
\draw[-] (0,0)  to[out=-20,in=200] (2,0)  node [below, label=right:$j_{i+1}$] {};
\draw[-] (2,0)  to[out=200,in=-20] (0,0)  node [below, label=left:$j_i$] {};
\end{tikzpicture}}\\
(a) & & (b) & & (c)
\end{array}$$
\caption{}
\label{fig:reduce}
\end{figure}

We now define a non-elliptic web $D'$ obtained from $D$. Keeping the rest of the web the same, perform the following surgery:

\begin{itemize}
\item[(a)] If the boundary vertices $i$ and $i+1$ are connected as in situation $(a)$ of Figure~\ref{fig:reduce}, replace this section of the web as shown in Figure~\ref{Fig:trim1}, where the state and sign of the lower edges are as determined by the growth algorithm. (Delete the two boundary vertices and create two new boundary vertices.)

\begin{figure}[h]
\centering
\includegraphics[width=10cm]{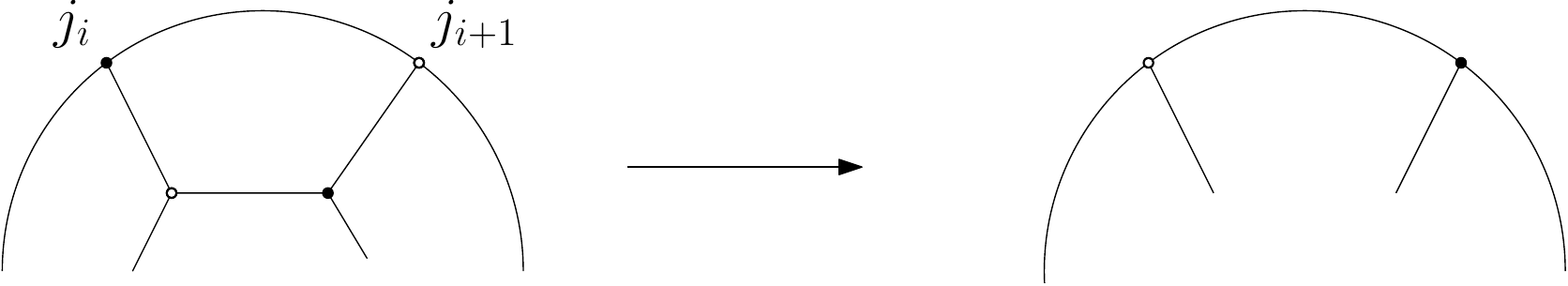}
\caption{\label{Fig:trim1}}
\end{figure}


\item[(b)] If boundary vertices $i$ and $i+1$ are connected as in situation $(b)$ of Figure~\ref{fig:reduce}, replace this section of the web as shown in Figure~\ref{Fig:trim2}. (Delete the two boundary vertices and create one new boundary vertex.)

\begin{figure}[h]
\centering
\includegraphics[width=10cm]{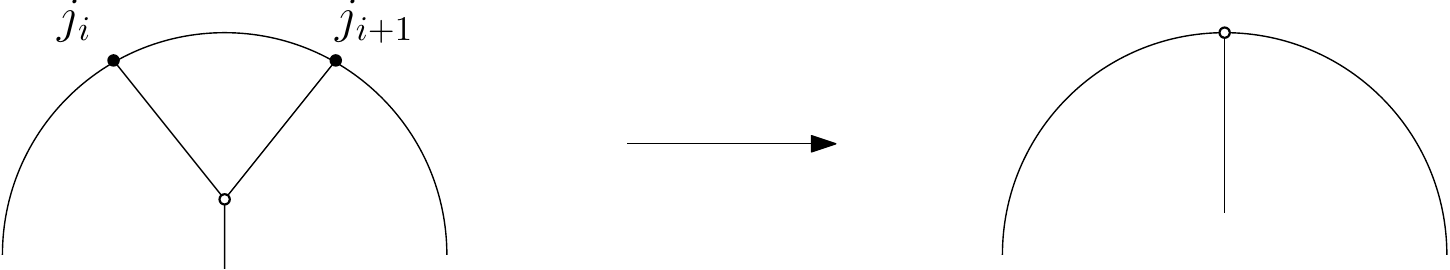}
\caption{\label{Fig:trim2}}
\end{figure}


\item[(c)] If boundary vertices $i$ and $i+1$ are connected as in situation $(c)$ of Figure~\ref{fig:reduce}, simply delete these boundary vertices and the edge between them.
\end{itemize}

We call this procedure \textit{trimming}, see Figure~\ref{Fig:trimming} for an example. We will need the following fact about this procedure. It is true in much more generality, but we will only state the version used in the proof of our main result in this section.

\begin{lemma}\label{Lemma:trimming} Let $D$ be a non-elliptic web with $n$ cyclically-labeled boundary vertices and let $D'$ be the non-elliptic web obtained from $D$ by trimming. Then the state string obtained by reading the state of each edge adjacent to the boundary of $D'$ is the KK-labeling of $D'$.
\end{lemma}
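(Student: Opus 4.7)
The plan is to exploit two structural facts stated earlier in the paper: (i) the Khovanov–Kuperberg growth rules may be applied in any order and produce the same web, and (ii) the correspondence between non-elliptic webs with degree-$1$ boundary and dominant sign-and-state strings is a bijection. Because of (ii), to identify the KK-labeling of $D'$ it suffices to exhibit \emph{any} dominant sign-and-state string that the growth algorithm turns into $D'$; then uniqueness forces it to equal the KK-labeling. So the goal becomes to show that the boundary-edge labels of $D'$, read in cyclic order, form such a dominant string.

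First, I would start from the dominant sign-and-state string $S=((s_1,j_1),\ldots,(s_n,j_n))$ of $D$. Because $j_i=1$ and $j_{i+1}\in\{0,-1\}$, the pair at positions $(i,i+1)$ is resolvable by exactly one of the growth rules of Figure~\ref{fig:growthrules}, and the three possibilities are exactly the three local pictures of Figure~\ref{fig:reduce}. Using the order-independence of the growth algorithm, I would choose to apply this one rule first. This produces a new ``partial growth'' frontier which agrees with $S$ outside positions $i,i+1$ and which, at those positions, has the sign/state labels produced by that single rule application: two new half-edges in case (a), one new half-edge in case (b), and no half-edges in case (c).

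Next, I would verify by inspection of Figures~\ref{Fig:trim1} and~\ref{Fig:trim2} that the sign/state pairs on this new frontier are precisely the sign/state labels placed on the newly-created boundary edges of $D'$ by the trimming procedure (in case (c) nothing needs to be checked, as both positions are simply removed). This is the only step requiring case-by-case verification, and is the main obstacle in that it depends on matching the conventions in the growth rule pictures with the conventions in the trimming figures — but it is a purely local pictorial check.

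Finally, I would complete the algorithm: having already performed one rule of the growth algorithm, the remainder of the algorithm can be run starting from the new frontier. By order-independence this remainder produces exactly the subweb of $D$ below the local piece we resolved, which is by construction $D'$. Equivalently, the full growth algorithm applied to the sign-and-state string $S'$ read off the boundary of $D'$ produces $D'$. Since $D'$ has degree-$1$ boundary and the algorithm produces it from $S'$, the string $S'$ must be dominant (by the bijection in (ii)), and $S'$ is then the unique dominant sign-and-state string corresponding to $D'$. Hence its state component is the KK-labeling of $D'$, as claimed.
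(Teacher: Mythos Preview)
Your proposal is correct and is essentially an unpacking of the paper's one-line proof, which simply says ``This follows directly from the fact that the inductive rules for the growth algorithm may be applied in any order.'' You spell out the same idea---apply the growth rule at positions $i,i{+}1$ first, observe that the resulting frontier coincides with the trimmed boundary data, and then invoke the bijection to identify it as the KK-labeling of $D'$---so there is no substantive difference in approach.
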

\begin{proof} This follows directly from the fact that the inductive rules for the growth algorithm may be applied in any order.
\end{proof}

\begin{figure}[h]
\centering
\includegraphics[width=13cm]{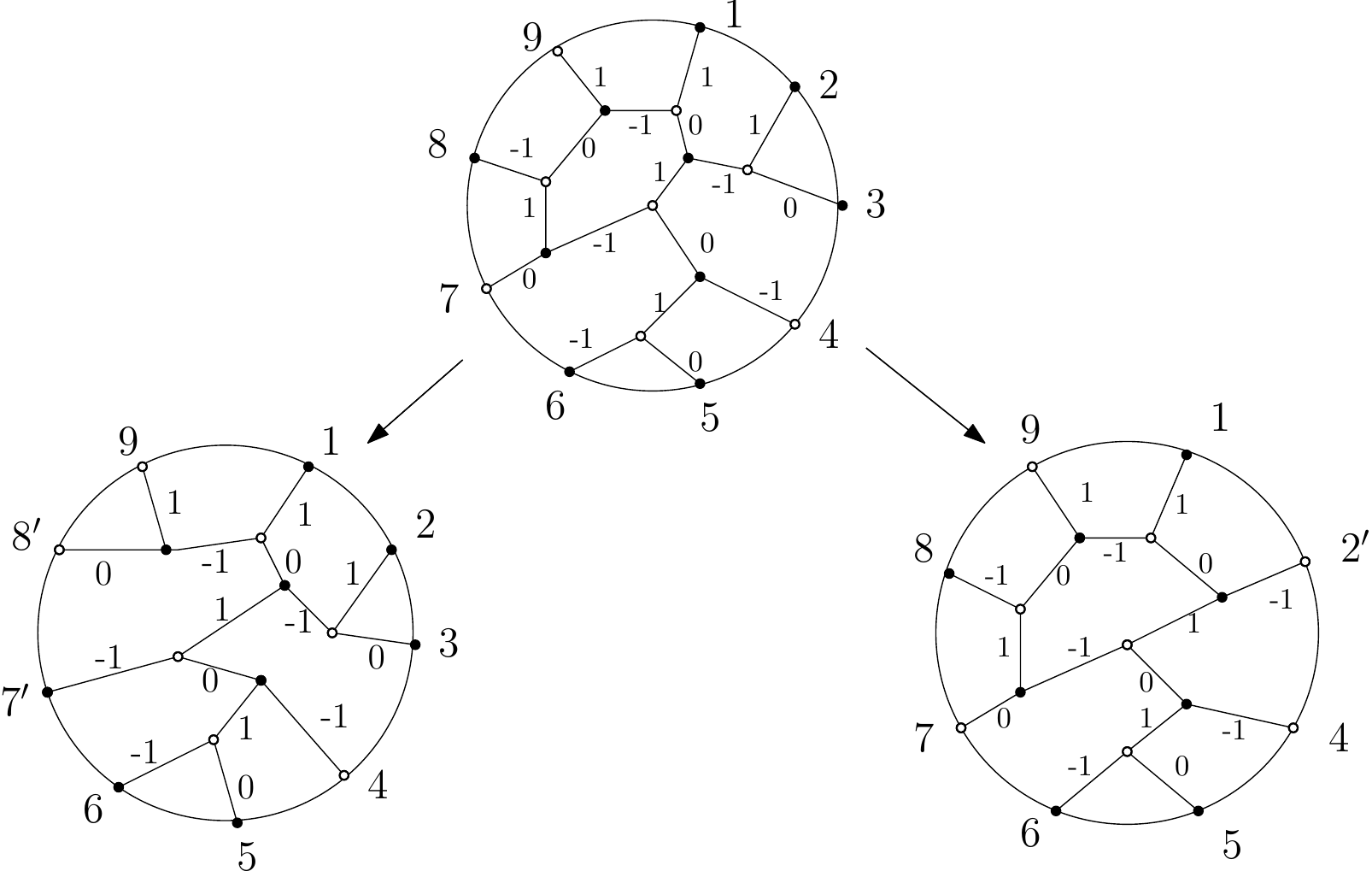}
\caption{Two different types of trimming a web. On the left side we apply type $(a)$ trimming on the vertices $7$ and $8$, and on the right side type $(b)$ trimming on the vertices $2$ and $3$. }
\label{Fig:trimming}
\end{figure}

\begin{proof}[Proof of Theorem~\ref{min_thm}]
Suppose $D$ is a non-elliptic web and that the proper edge coloring determined by the KK-labeling of any non-elliptic web with fewer vertices than  $D$ is lexicographically minimal. Let $(s_1,\dots,s_k)$ be the state string of $D$ and let 
$(j_1, \ldots, j_k)$ be its KK-labeling. We will first assume $j_{i+1} = 0$ and that $j_r =1$ for any $r \leq i$. 

Since we know that $j_{i+1} = 0$, we are in either case $(a)$ or $(b)$ in Figure~\ref{fig:reduce}. We only prove the result in the case where these edges appear in configuration $(a)$ as the proof in case $(b)$ is similar but simpler.

First, trim the web $D$ as in Figure~\ref{Fig:trim1}. By Lemma~\ref{Lemma:trimming}, the resulting web $D^\prime$ has its KK-labeling given by $((s_1,j_1),\ldots, (s_{i-1},j_{i-1}), (s_i^*,0), (s_{i+1}^*, 1), (s_{i+2},j_{i+2}), \ldots, (s_k,j_k))$ where $s_i^* \neq s_i$ and $s_{i+1}^* \neq s_{i+1}$ . By induction, we know that the KK-labeling of $D^\prime$ gives the  minimal proper edge coloring of $D^\prime$.

Now, suppose that the KK-labeling of $D$ does not give the lexicographically minimal proper edge coloring of $D$. 
It is clear that the edges of $D$ from vertices 1, 2,\ldots, $i$ are colored in a lexicographically minimal way. To see that the color of the edge from vertex $i+1$ is also lexicographically minimal, first suppose vertex $i$ is white and vertex $i+1$ is black, which means the edge incident to vertex $i$ is colored $-1$ and the edge incident to vertex $i+1$ is colored 0. 
Then if ${\ell^*}$ were another proper edge coloring of $D$ where the edge from vertex $i+1$ were colored with 1 (instead of 0) and $j_{r} = 1$ for any $r \le i$, the induced coloring on $D'$ would have the edge from vertex $i$ colored 1 and the edge from vertex $i+1$ colored $-1$. This implies that trimming $D$ with respect to the coloring $\ell^*$ would give a smaller coloring than that obtained from its KK-labeling. This contradicts Lemma~\ref{Lemma:trimming}. The argument is analogous if vertex $i$ is black and vertex $i+1$ is white. 

Since edges from vertices $1,\ldots,i+1$ are colored in a lexicographically minimal way, there must exist another strictly smaller coloring $\ell^\prime$ that differs only on edges incident to vertices $i+2$ through $n$. By trimming $D$ and keeping all remaining edge colors the same, this coloring $\ell^\prime$ restricts to a proper edge coloring of $D^\prime$ that is lexicographically smaller than the coloring from its KK-labeling. This is a contradiction.

If $j_{i+1}=-1$, the case for situation $(c)$ of Figure~\ref{fig:reduce} is trivial, and situation $(a)$ cannot occur.  Consider situation $(b)$. Suppose vertices $i$ and $i+1$ are black. If it were possible to color the edge incident to vertex $i+1$ with 0 instead of $-1$, then the color of the vertical edge below would change to $-1$. This would lead to a coloring of $D^\prime$ that is lexicographically smaller than that coming from its KK-labeling. This shows that the first $i+1$ vertices are colored in a lexicographically minimal fashion; the proof that the whole coloring is lexicographically minimal now follows the same argument as above. The case where vertices $i$ and $i+1$ are white is completely analogous.
 \end{proof}

\begin{lemma}\label{lem:onecoloring}
There is exactly one proper edge coloring of non-elliptic web $D$ whose edges incident to the boundary vertices have colors determined by the KK-labeling. 
\end{lemma}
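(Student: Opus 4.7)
The approach I would take is strong induction on the number of internal vertices of $D$, leveraging the trimming construction introduced in the proof of Theorem~\ref{min_thm}. The base case has no internal vertices, so each connected component of $D$ is a single arc between two boundary vertices whose color is directly read off from the KK-labeling at either endpoint.

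For the inductive step I would choose, as in the proof of Theorem~\ref{min_thm}, the smallest index $i$ with $j_{i+1}\neq 1$, so that $j_i=1$. Trimming then produces a smaller non-elliptic web $D'$ whose KK-labeling is, by Lemma~\ref{Lemma:trimming}, the one obtained from the KK-labeling of $D$ via the growth-rule transformation at positions $i,i+1$. Given an arbitrary proper edge coloring $\ell$ of $D$ whose boundary colors agree with the KK-labeling of $D$, I want to argue that $\ell$ is forced to agree with the KK-coloring of $D$ on every edge removed by trimming; then the restriction of $\ell$ to $D'$ will be a proper edge coloring of $D'$ whose boundary colors match the KK-labeling of $D'$, and the inductive hypothesis identifies this restriction with the KK-coloring of $D'$, so $\ell$ equals the KK-coloring of $D$ on the nose.

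The main computation is to verify the local forcing on the edges removed by trimming. The key observation that makes this work is that because $j_i=1$ and $j_{i+1}\in\{0,-1\}$, either $c_i\neq c_{i+1}$ (in cases (a) and (b) of Figure~\ref{fig:reduce}), or the only removed edge has color $c_i=c_{i+1}$ (in case (c)). In case (c), the forced color of the single removed edge is immediate. In case (b), the third edge at the trivalent vertex $u$ must take the unique color avoiding both $c_i$ and $c_{i+1}$. In case (a), the horizontal edge $u$-$w$ must avoid $c_i$ at $u$ and $c_{i+1}$ at $w$; since $c_i\neq c_{i+1}$, this pins its color down as the remaining third color, after which the colors of $u$-(below) and $w$-(below) are forced to be $c_{i+1}$ and $c_i$ respectively. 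A direct check against Figure~\ref{fig:growthrules} then confirms that these forced values coincide with the growth-rule output, i.e.\ with the KK-labeling of $D'$.

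The main obstacle to be aware of, and the reason the choice of $i$ matters, is the second diagram of the top row of Figure~\ref{fig:growthrules}, in which $c_i=c_{i+1}=0$ and the horizontal edge $u$-$w$ would admit two locally valid colors; the forced-color step of the argument fails in that configuration. Taking $i$ smallest with $j_{i+1}\neq 1$ guarantees $j_i=1$, which rules out this ambiguous configuration at the trimming location and makes the inductive argument go through cleanly.
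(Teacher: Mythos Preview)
Your argument is correct in substance and is really a careful unpacking of the paper's one-line proof (``This follows from the growth rules''): both approaches amount to peeling the web from the boundary using the growth rules and observing that the colors of the newly exposed edges are forced at each step. Your explicit identification of the potential ambiguity in the second ``Different'' growth rule (the $(0,0)$ configuration), and your explanation that choosing the smallest $i$ with $j_{i+1}\neq 1$ avoids it, is exactly the point the paper is suppressing.

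There is one small technical slip. You induct on the number of \emph{internal} vertices, but in case~(c) trimming removes only the arc between boundary vertices $i$ and $i+1$, so $D'$ has the same number of internal vertices as $D$ and your inductive hypothesis does not apply. The fix is immediate: induct on the total number of vertices (as the paper does in the proof of Theorem~\ref{min_thm}) or on the number of edges; both quantities strictly decrease under each of the three trimming cases. With that adjustment your proof goes through.
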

\begin{proof}
This follows from the growth rules.
\end{proof}

\section{Monomial Orderings} 
In this section, we explain how to reinterpret Theorem \ref{min_thm} in terms of a monomial order. We begin by recalling this concept.
Our main reference for this material is~\cite{cox2007ideals}.

\begin{definition}[{\cite[Definition 2.2.1]{cox2007ideals}}] 
A \emph{monomial ordering} on a polynomial ring $\mathbb{C}[z_1,\ldots,z_n]$ is a total order $<$ on the set of monomials that satisfies the following two properties:
\begin{itemize}
    \item It is compatible with multiplication in the sense that if $z^\alpha$, $z^\beta$ and $z^\gamma$ are three monomials in  $\mathbb{C}[z_1,\ldots,z_n]$
    then $z^\alpha < z^\beta$ if and only if $z^{\alpha+\gamma} < z^{\beta+\gamma}$.
    \item It is a well-ordering, in the sense that every non-empty set of monomials has a smallest element.
    \end{itemize}
\end{definition}

One can observe that there is a monoid isomorphism between the monomials in a polynomial ring $\mathbb{C}[z_1,\ldots,z_n]$ and $\mathbb{Z}^n_{\geq 0}$, which is given by the map
\[ \mathbf{\alpha} = (\alpha_1,\cdots,\alpha_n)\mapsto z^{\mathbf{\alpha}}=z_1^{\alpha_1}\cdots z_n^{\alpha_n} \]
for any given $n$-tuple $(\alpha_1,\cdots,\alpha_n)\in \mathbb{Z}_{\geq 0}^n$.  The monoid structure on the monomials is multiplication, while on $\mathbb{Z}_{\geq 0}^n$ we use vector addition.  Hence, there is also a one-to-one correspondence between orderings on monomials compatible with multiplication and orderings on $\mathbb{Z}^n_{\geq 0}$ compatible with its own monoid structure. 


 
We now recall the definition of \emph{graded reverse lexicographic order}, or grevlex for short, which is a particular example of a monomial ordering, and which will turn out to be useful for our purposes.
  

\begin{definition}[{\cite[Definition 2.2.6]{cox2007ideals}}] 
Let $\alpha=(\alpha_1,\dots,\alpha_n)$ and $\beta=(\beta_1,\dots,\beta_n)$ be two elements in $\mathbb Z^n_{\geq 0}$. We say that $\alpha >_{\grlex} \beta$ if: \begin{itemize}
    \item $ \sum_{i=1}^n \alpha_i > \sum_{i=1}^n \beta_i$, or
    \item $\sum_{i=1}^n \alpha_i = \sum_{i=1}^n \beta_i$ and if $j$ is the smallest index such that $\alpha_j\ne \beta_j$, then $\alpha_j<\beta_j$.
\end{itemize}
For the corresponding monomials we will write $z^{\alpha}>_{\grlex} z^{\beta}$ if $\alpha >_{\grlex} \beta$.
\end{definition}


Given a polynomial ring $R=\mathbb C[z_1,\dots,z_n]$ with a monomial order, if $f$ is a non-zero element of $R$, the \emph{leading monomial} of $f$ is the monomial appearing in $f$ with non-zero coefficient which is largest with respect to the monomial order. The \emph{leading term} of $f$ is the leading monomial of $f$ multiplied by its coefficient in $f$.

Given a sign string $S\in\{+,-\}^n$, we define the ring $R_S$ to be the polynomial ring on $3n$ variables, consisting of $x_{-1,i}$, $x_{0,i}$, and $x_{1,i}$ for each $i$ with $S_i=+$ and $y_{i,-1}$, $y_{i,0}$, and $y_{i,1}$ for 
each $i$ with $S_i=-$.

Consider the graded reverse lexicographic order on monomials in $R_S$ induced by the order on the variables 
that first compares their corresponding vertex number, and subsequently, if $i$
is a black vertex, has $x_{-1,i}<x_{0,i}<x_{1,i}$, and if $i$ is a white vertex
has $y_{i,1}<y_{i,0}<y_{i,-1}$.
We may reinterpret Theorem~\ref{min_thm} as saying that if $D$ is a web with sign string $S$, such that all the boundary vertices of $D$ have degree one, then the leading term of $[D]$ under this ordering, comes from a unique proper edge coloring, which is the one determined by the KK-labeling of $D$.

This will be explored further in the next section.

\begin{example} 
Consider the web $D$ shown in Figure~\ref{fig:full_example}, and let the vectors corresponding to its vertices be \[\begin{bmatrix}x_{-1,1}\\ x_{0,1}\\x_{1,1}\end{bmatrix}\hspace{.1in}\begin{bmatrix}y_{2,-1} & y_{2,0} & y_{2,1}\end{bmatrix}\hspace{.1in}\begin{bmatrix}y_{3,-1} & y_{3,0} & y_{3,1}\end{bmatrix}\hspace{.1in}\begin{bmatrix}x_{-1,4}\\x_{0,4}\\x_{1,4}\end{bmatrix}.\] There are 12 proper edge colorings of $D$, which can be split into two groups of six as shown in the leftmost web and middle web in Figure~\ref{fig:full_example}. Terms in $[D]$ coming from the first family will have negative sign, while terms from the second family will have a positive sign. The term of $[D]$ that is in bold is the leading term in grevlex order and comes from the lexicographically minimal coloring (shown on the right in Figure~\ref{fig:full_example}) determined by the KK-labeling.
\begin{align*}[D]= \mathbf{x_{1,1}y_{2,-1}y_{3,1}x_{-1,4}}
+x_{1,1}y_{2,0}y_{3,1}x_{0,4}
-x_{1,1}y_{2,1}y_{3,-1}x_{-1,4}
-x_{1,1}y_{2,1}y_{3,0}x_{0,4}
+x_{0,1}y_{2,-1}y_{3,0}x_{-1,4}\\
-x_{0,1}y_{2,0}y_{3,-1}x_{-1,4}
-x_{0,1}y_{2,0}y_{3,1}x_{1,4}
+x_{0,1}y_{2,1}y_{3,0}x_{1,4}
-x_{-1,1}y_{2,-1}y_{3,0}x_{0,4}
-x_{-1,1}y_{2,-1}y_{3,1}x_{1,4}\\
+x_{-1,1}y_{2,0}y_{3,-1}x_{0,4}
+x_{-1,1}y_{2,1}y_{3,-1}x_{1,4}\end{align*}
\end{example}


\begin{figure}
    \centering
    \scalebox{.5}{
 \includegraphics{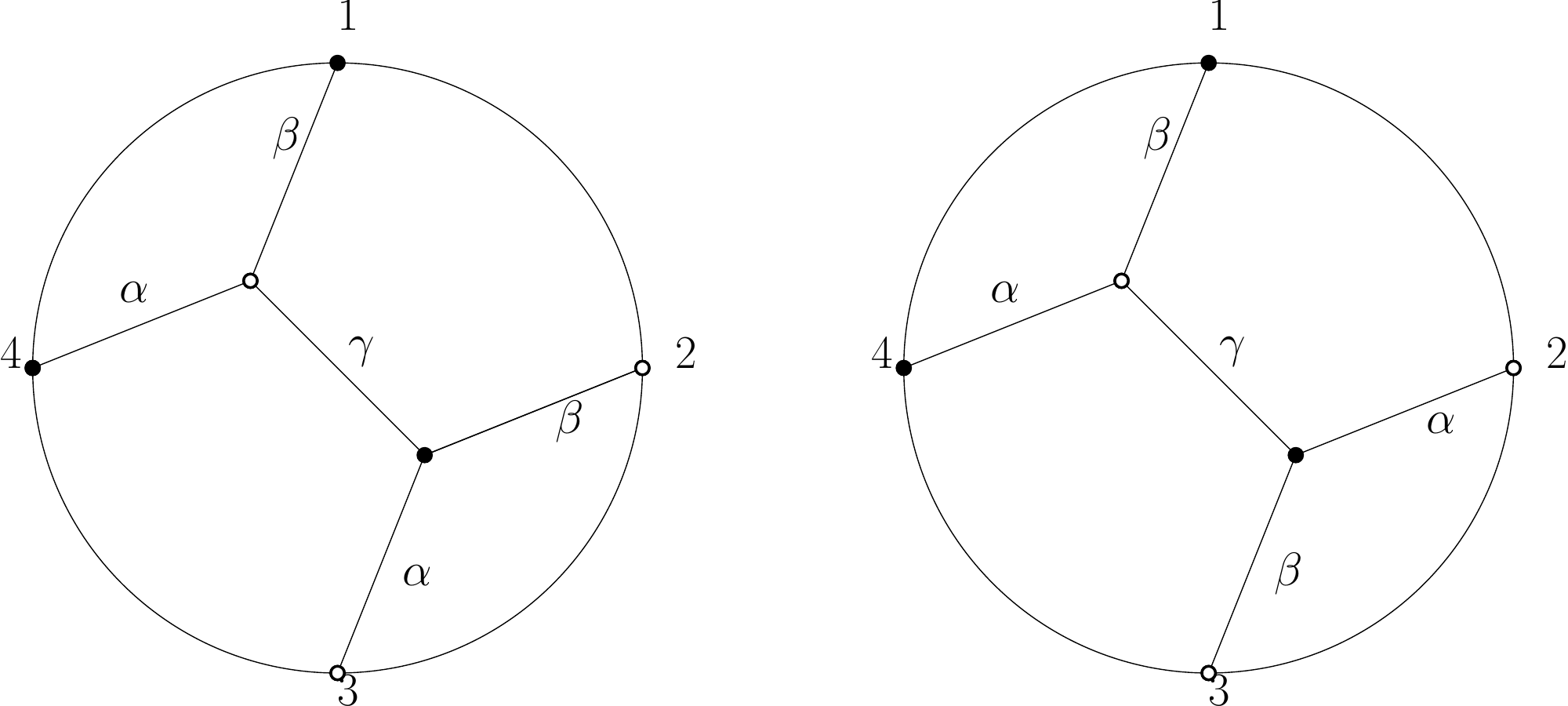}\hspace{.6in}\includegraphics{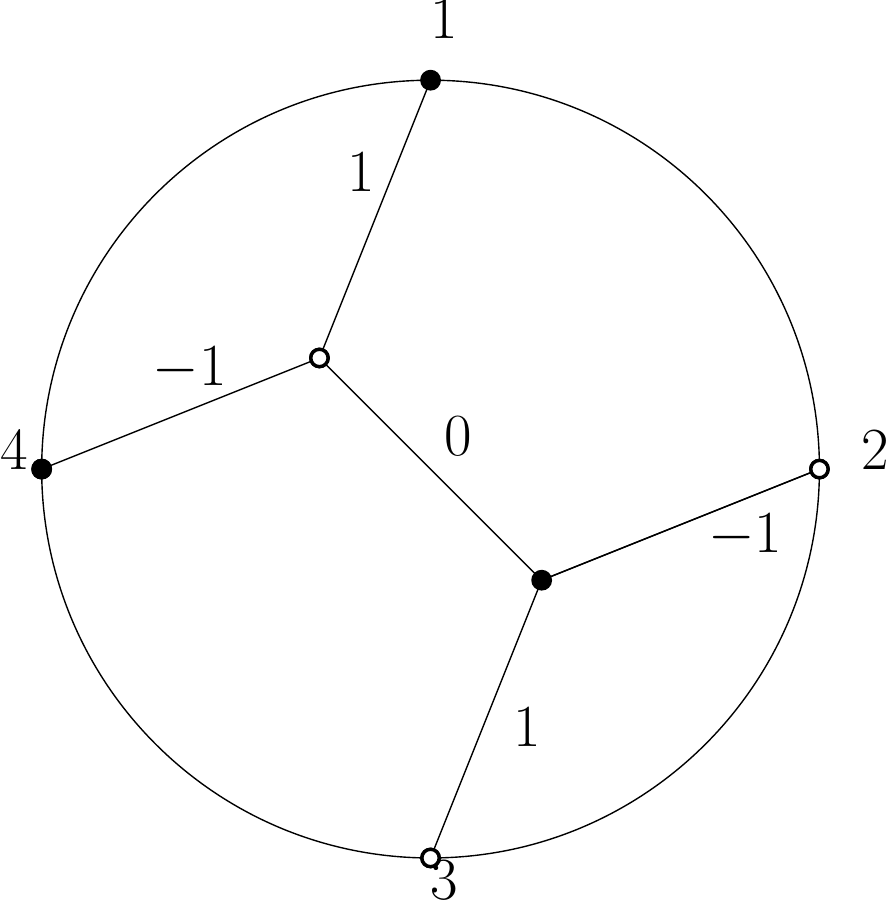} } 
    \caption{The leftmost and middle images show the two families of proper edge coloring of the web. The image on the right shows its minimal coloring.}
    \label{fig:full_example}
\end{figure}

\section{Extension to general webs}\label{sec:generalwebinvariants}

We now show how to extend our results from the previous section to webs whose boundary vertices may have degree greater than 1. Again, let $S\in\{+,-\}^n$ be a sign string, and let $R_S$ be the corresponding polynomial ring in $3n$ variables. We again take graded reverse lexicographic order on $R_S$.

Let $D$ be any non-elliptic web with with $n$ cyclically-labeled boundary vertices and multidegree $(d_1,\ldots,d_n)$, where $d_i$ is the degree of boundary vertex $i$. Define the \textit{unclasping} of $D$, denoted $\tilde{D}$, to be the non-elliptic web obtained from $D$ by replacing each boundary vertex $i$ with $d_i$ degree 1 vertices of the same color as vertex $i$, each serving as an endpoint of an edge formerly adjacent to vertex $i$, see Figure~\ref{fig:unclasping}. Then $\tilde{D}$ is a web where each boundary vertex has degree 1, and we can find its KK-labeling as in Section \ref{min-sec}. This KK-labeling corresponds to a proper edge coloring of $\tilde{D}$ and thus a proper edge coloring of $D$. There is therefore a term in $[D]$ coming from this proper edge coloring. 


\begin{figure}[htbp!]
\includegraphics[width=11cm]{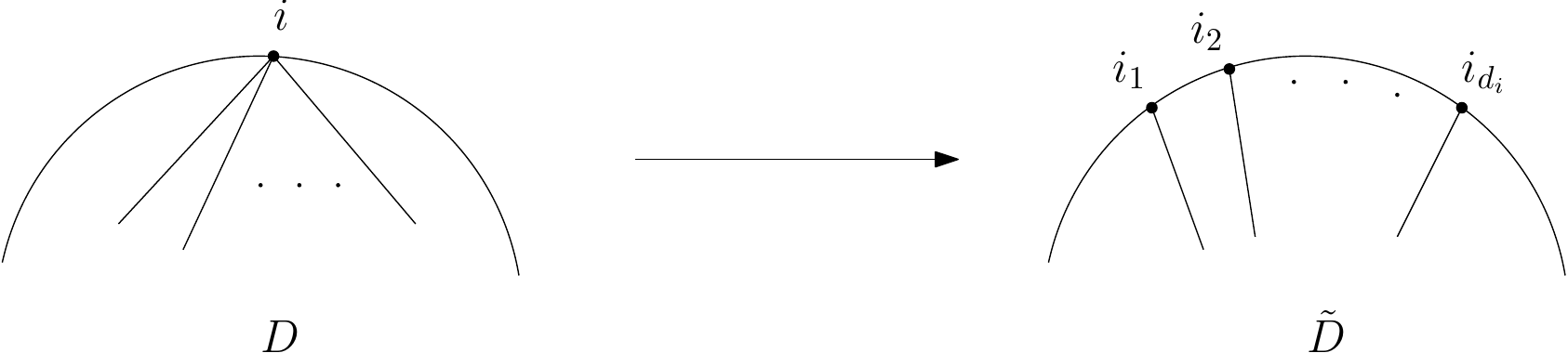}
\caption{The unclasping of a web.}\label{fig:unclasping}
\end{figure}

\begin{theorem}
Let $D$ be a nonzero non-elliptic web. The monomial coming from the KK-labeling of the unclasping $\tilde{D}$ is the leading term in the web invariant $[D]$ with respect to the graded reverse lexicographic order, and it appears with coefficient $\pm1$. 
\end{theorem}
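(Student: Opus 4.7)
The plan is to reduce to Theorem~\ref{min_thm} via the unclasping map. I would define a $\mathbb{C}$-algebra surjection $\pi: R_{\tilde S} \twoheadrightarrow R_S$ that identifies, for each boundary vertex $v_i$ of $D$, the color-$k$ variable at each of the $d_i$ unclasped copies of $v_i$ in $\tilde D$ with the single color-$k$ variable at $v_i$ in $R_S$. From the sum-over-proper-colorings definitions of $[\tilde D]$ and $[D]$, one checks $\pi([\tilde D]) = [D]$: proper edge colorings of $\tilde D$ and $D$ are naturally identified (the edges are unchanged by unclasping), the internal-vertex signs agree, and the boundary monomial at $v_i$ in $[D]$ is exactly the $\pi$-image of the product of boundary monomials at $v_i$'s unclasped copies in $[\tilde D]$.

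Applying Theorem~\ref{min_thm} and Lemma~\ref{lem:onecoloring} to $\tilde D$, the grevlex leading term of $[\tilde D]$ in $R_{\tilde S}$ is the monomial $M^{\star}$ coming from the unique KK-labeling $\ell_{\mathrm{KK}}$, with coefficient $\pm 1$. Set $M := \pi(M^{\star}) \in R_S$; this is the monomial the theorem asserts is the leading term of $[D]$. It remains to show that $M$ is in fact the $R_S$-grevlex leading term of $[D]$ and that it appears with coefficient $\pm 1$.

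For the leading-term claim, given any proper edge coloring $\ell$ of $D$ I want $M_D(\ell) = \pi(M_{\tilde D}(\ell)) \le_{\grlex} M$. I would argue the contrapositive: if $M_D(\ell) >_{\grlex} M$, let $v_i$ be the first boundary vertex at which the color-count tuple $(n_{-1,i}, n_{0,i}, n_{1,i})$ for $\ell$ differs from that for $\ell_{\mathrm{KK}}$ in the lex order on these tuples induced by the $R_S$-grevlex ordering. Adapting the trimming procedure of Section~\ref{min-sec} inside $v_i$'s unclasping, I would reorder the unclasped copies of $v_i$ so that the improvement appears at the earliest boundary position of $\tilde D$ within this unclasping, producing from $\ell$ a proper edge coloring of $\tilde D$ whose boundary color string is lex-smaller than $\ell_{\mathrm{KK}}$'s at that position --- contradicting Theorem~\ref{min_thm}.

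For the coefficient claim, contributions to the coefficient of $M$ in $[D]$ come from all proper edge colorings of $\tilde D$ whose multiset of colors at each $v_i$ agrees with that of $\ell_{\mathrm{KK}}$. In the degree-one case Lemma~\ref{lem:onecoloring} gives uniqueness. In general multiple such colorings can exist, and I expect the main obstacle of the proof to lie precisely here: one must show their signed sum equals $\pm 1$, either by proving a multiset-level refinement of Lemma~\ref{lem:onecoloring} that upgrades uniqueness to this setting, or by exhibiting a sign-reversing pairing on the non-KK colorings that share $\ell_{\mathrm{KK}}$'s multisets.
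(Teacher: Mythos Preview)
Your reduction via the surjection $\pi:R_{\tilde S}\to R_S$ is correct, and it is true that $\pi([\tilde D])=[D]$ and that Theorem~\ref{min_thm} together with Lemma~\ref{lem:onecoloring} pin down the leading term of $[\tilde D]$. The gap is in how you pass from $\tilde D$ back to $D$. In your leading-term step, the ``reorder the unclasped copies of $v_i$'' move is not a legal operation: a proper edge coloring $\ell$ of $D$ determines a \emph{unique} coloring $\tilde\ell$ of $\tilde D$ (the edges are literally the same), and you cannot permute colors among the unclasped edges of $v_i$ while keeping the coloring proper, since each such edge has its own internal neighbour. What is actually needed, and what the paper uses, is the observation that the KK-labels on the unclasped edges $e_{i_1},\dots,e_{i_{d_i}}$ of each boundary vertex form a weakly monotone sequence: if two adjacent same-sign unclasped edges had strictly decreasing labels, the ``Same'' growth rule would join them at one internal vertex, and re-clasping would create a double edge in $D$, contradicting non-ellipticity. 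With this monotonicity, at the first vertex $i$ and first unclasped edge $e_{i_j}$ where $\tilde\ell'$ deviates from the KK-coloring, Theorem~\ref{min_thm} forces the change to lower the numerical color (for $v_i$ black), and monotonicity of the KK-sequence then forces $\ell'$ to have a strictly larger exponent on $x_{-1,i}$, or the same exponent there and a strictly larger one on $x_{0,i}$. Either way the $R_S$-monomial of $\ell'$ is strictly grevlex-smaller than $M$.

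Your second gap, the coefficient $\pm 1$, is then not a separate obstacle at all. Once the leading-term argument gives \emph{strict} inequality $M_D(\ell')<_{\grlex}M$ for every $\ell'\neq\ell_{\mathrm{KK}}$, the KK-coloring is the unique proper coloring of $D$ producing $M$, and it contributes a single term with coefficient $\pm 1$. There is no need for a sign-reversing involution or a multiset-level uniqueness lemma; the issue dissolves as soon as the monotonicity ingredient is in place.
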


\begin{proof}

Let $\tilde \ell$ be the proper edge coloring of $\tilde{D}$ induced by its KK-labeling. This proper edge coloring gives rise to a proper edge coloring of $D$, which we call $\ell$. Let $p$ denote the term of $[D]$ coming from $\ell$. We wish to show that all other proper colorings of $D$ give rise to monomials that are strictly less than $p$ in the graded reverse lexicographic order. 

Let $q$ be the term of $[D]$ corresponding to a different coloring $\ell'$, and let $\tilde{\ell}'$ be the corresponding coloring of $\tilde{D}$. Let $i$ be the first vertex of $D$ where $\ell$ and $\ell'$ differ. Suppose to begin with that $i$ is black.  Let ${i_1},\ldots,{i_k}$ denote the vertices and $e_{i_1}, \ldots, e_{i_k}$ the edges in $\tilde{D}$ coming from $i$.
We first show that the sequence of edge labels $(\tilde{\ell}(e_{i_1}), \ldots, \tilde{\ell}(e_{i_k}))$ is weakly increasing. Suppose to the contrary that $\tilde{\ell}(e_{i_j}) > \tilde{\ell}(e_{i_{j+1}})$ for some $j \in \{1, \ldots, k-1\}.$ We can reconstruct the web using the web growth rules (see Figure~\ref{fig:growthrules}). Starting by applying the rule to $i_j$ and $i_{j+1}$, we see that we would obtain a web such that, when it was clasped again, it would have a double edge, violating the assumption that $D$ is a non-elliptic web.

Consider the first edge of $\tilde{D}$ in clockwise order that is differently colored in $\tilde{\ell}$ and $\tilde{\ell}'$, and find the corresponding edge in $D$. If its color in $\ell'$ were greater than in $\ell$, the coloring $\tilde \ell'$ of $\tilde{D}$ would give rise to a term in $[\tilde{D}]$ greater than that coming from $\tilde{\ell}$, which is impossible. Thus, the first change converts a $0$ into a $-1$, or else a $1$ into a $0$ or a $-1$. In either case, even though we do not know how the subsequent edges connected to vertex $i$ (or later vertices) are changed, it follows that $p>q$ in graded reverse lexicographic order, as desired.

A similar argument applies if $i$ is white, where then $(\tilde \ell(e_{i_1}), \ldots, \tilde \ell(e_{i_k}))$ is weakly decreasing.
\end{proof}

\begin{example} 
The grevlex leading term of the web on the left in Figure \ref{Fig:UnclaspWeb} is $x_{1,1}y_{2,-1}x_{0,3}x^2_{-1,4}$. The grevlex leading leading term of the web on the right is $x_{1,1}y_{2,-1}x_{0,3}x_{-1,4_1}x_{-1,4_2}$.
\begin{figure}
    \centering
    \scalebox{.5}{
 \includegraphics{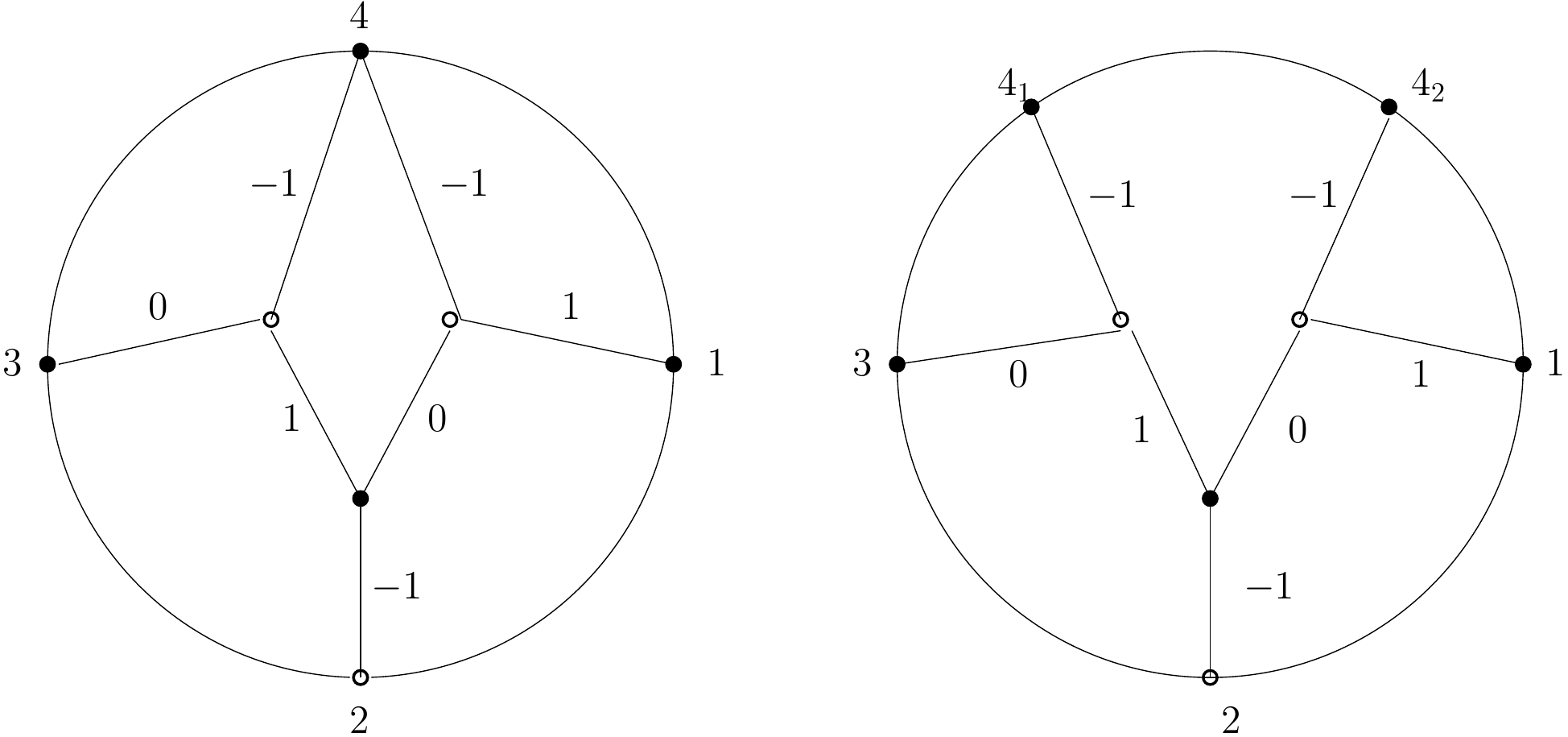}} 
    \caption{A web and its  unclasping.}
    \label{Fig:UnclaspWeb}
\end{figure}
\end{example}

\bibliographystyle{plain}
\bibliography{main}

\end{document}